\newtheorem{thm}{Theorem}
\newtheorem{prop}[thm]{Proposition}
\newtheorem{corollary}[thm]{Corollary}
\newtheorem{Proposition}[thm]{Proposition}
\newtheorem{Definition}[thm]{Definition}
\newtheorem{Example}[thm]{Example}
\newtheorem{Remark}[thm]{Remark}
\newcommand{\C}{\mathbb{C}}
\newcommand{\K}{\mathbb{K}}
\newcommand{\Z}{\mathbb{Z}}
\newcommand{\cala}{\mathcal{A}}
\newcommand{\eps}{\varepsilon}
\newcommand{\ra}{\rightarrow}
\DeclareMathOperator{\Hom}{Hom}
\newcommand{\ip}[1]{\langle#1\rangle}
\newcommand{\inverse}{^{-1}}
\begin{document}
\author[I. Dimitrov]{Ivan Dimitrov}
\address{Department of Mathematics and Statistics, Queen's University}
\email{dimitrov@queensu.ca}

\author[C. Paquette]{Charles Paquette}
\address{Department of Mathematics and Computer Science, Royal Military
College of Canada}
\email{charles.paquette.math@gmail.com}

\author[D. Wehlau]{David Wehlau}
\address{Department of Mathematics and Computer Science, Royal Military
College of Canada}
\email{wehlau@rmc.ca}

\author[T. Xu]{Tianyuan Xu}
\address{Department of Mathematics and Statistics, Haverford College, Haverford PA, USA}
\email{txu2@haverford.edu}

%%pagestyle
%\pagestyle{fancy}
%\lhead{}\chead{} \rhead{}
%\lfoot{} \cfoot{Page \thepage} \rfoot{} 
%%\renewcommand{\headrulewidth}{0.4pt} \renewcommand{\footrulewidth}{0pt}
%%
%\setlength{\headsep}{0.3in}
%%\setlength\parindent{0em}
%%
%\thispagestyle{empty}
%%\spacing{1.5}

\subjclass{Primary 20C07; Secondary 16S34, 16U40, 16D40}

\title{Idempotents in the group algebra of the infinite dihedral group}

\begin{abstract}
    We prove that over an algebraically closed field $\K$ of characteristic different from $2$, the group algebra $R=\K D_\infty$ of the infinite dihedral group $D_\infty$ has exactly six conjugacy classes of involutions (equivalently, of idempotents). This allows us to recover the fact that $R$ admits exactly four non-isomorphic indecomposable projective modules of the form $eR$ where $e$ is an idempotent, a result that was first established by Berman and Buz\'asi.
\end{abstract}

\maketitle

\section{Introduction}

In this short note, we consider an algebraically closed field $\K$ of characteristic different from
$2$ and study the involutions of the group algebra $R=\K D_\infty$ where $D_\infty$ is the infinite
dihedral group $D_\infty = \ip{a,b \mid a^2=b^2=1}$. Our goal is to give an elementary,
self-contained classification of conjugacy classes of involutions in $R$. Our main result is the following theorem. 

\begin{thm}
\label{thm:main}
    The algebra $R$ has exactly six conjugacy classes of involutions, namely, those of the elements $1, -1, \frac{1+a}{2},  \frac{1-a}{2},\frac{1+b}{2}$ and $\frac{1-b}{2}$.
\end{thm}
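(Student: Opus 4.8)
\emph{Plan.} Since $\operatorname{char}\K\ne2$, the map $x\mapsto\tfrac12(1+x)$ is a conjugacy-preserving bijection from involutions of $R$ to idempotents of $R$ (inverse $e\mapsto2e-1$), carrying the six elements of the statement to the idempotents $1,0,\tfrac{1+a}2,\tfrac{1-a}2,\tfrac{1+b}2,\tfrac{1-b}2$; so I will instead show $R$ has exactly these six conjugacy classes of idempotents. Write $t=ab$ and $u=ab+ba=t+t\inverse$. The first step is to realise $R$ as an order in $M_2(\K(u))$: $u$ is central, $Z:=Z(R)=\K[u]$, $t$ satisfies $t^2-ut+1=0$, and $R$ is $Z$-free of rank $4$ with basis $1,t,a,ta$. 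The $\K[t^{\pm1}]$-module $M:=\K[t^{\pm1}]$ on which $a$ acts by $t\mapsto t\inverse$ is a faithful left $R$-module, $Z$-free of rank $2$; with $K:=\K(u)$, $R_K:=R\otimes_ZK$ and $L:=M\otimes_ZK$, the polynomial $t^2-ut+1$ is irreducible over $K$ (its discriminant $u^2-4$ is squarefree, so not a square in $K$ --- this is where $\operatorname{char}\K\ne2$ is used), hence $L=\operatorname{Frac}(\K[t^{\pm1}])$ is a quadratic field extension of $K$ which is a faithful simple $R_K$-module with $\operatorname{End}_{R_K}(L)=K$; comparing $K$-dimensions gives $R_K\cong M_2(K)$.

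\emph{Classifying the lattices $Re$.} Let $e$ be an idempotent with $e\ne0,1$. Then $Re\subseteq R$ is $Z$-torsion-free, and $Re\otimes_ZK$ is a nonzero proper left ideal of $M_2(K)$, hence simple and $\cong L$; so $Re$ embeds as a full $R$-lattice in $L$. As an $R$-submodule of $L=\operatorname{Frac}(\K[t^{\pm1}])$ it is a fractional ideal $c\,\K[t^{\pm1}]$ of the PID $\K[t^{\pm1}]$ ($c\in L^{\times}$) stable under $\sigma\colon t\mapsto t\inverse$, i.e.\ $\sigma(c)/c\in\K[t^{\pm1}]^{\times}=\K^{\times}\langle t\rangle$; applying $\sigma$ forces $\sigma(c)/c=\pm t^{k}$ for some $k\in\Z$. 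Every $R$-map between two such lattices is multiplication by an element of $\operatorname{End}_{R_K}(L)=K$, so $c\,\K[t^{\pm1}]\cong c'\,\K[t^{\pm1}]$ iff $c'/c\in K^{\times}\langle t\rangle$; writing $\sigma(c)/c=\pm t^{k}$, the pair $(\pm,\,k\bmod2)$ is then a well-defined isomorphism invariant taking at most four values. A direct computation with the explicit matrices of $a,b,t$ in $M_2(Z)$ shows $\tfrac{1\pm a}2$ and $\tfrac{1\pm b}2$ realise all four, so there are exactly four isomorphism classes of modules $Re$ with $e\ne0,1$.

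\emph{Upgrading isomorphism to conjugacy.} Recall $e\sim e'$ in $R$ iff $Re\cong Re'$ \emph{and} $R(1-e)\cong R(1-e')$, so I must also pin down the complement. For this I would pass to the residue algebras $A_{\pm}:=R/(u\mp2)R$, each $\cong\K[\eps]/(\eps^{2})\rtimes\langle a\rangle$ with $a\eps a\inverse=-\eps$: this is the self-injective Nakayama algebra of the quiver $\bullet\rightleftarrows\bullet$ with all length-$2$ paths zero, whose conjugacy classes of idempotents are exactly those of $0,1,\bar g_{\pm}:=\overline{\tfrac{1+a}2}$ and $\bar g'_{\pm}:=\overline{\tfrac{1-a}2}$, with $\bar g_{\pm}\not\sim\bar g'_{\pm}$. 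For $e\ne0,1$ one checks $\bar e\ne0,1$ in $A_{\pm}$ (otherwise $e$ or $1-e$ lies in $(u\mp2)R$, and then in $\bigcap_{n}(u\mp2)^{n}R=0$, since $u\mp2$ is a non-zero-divisor on $R$), so $Re/(u\mp2)Re\cong A_{\pm}\bar e$ is a two-dimensional direct summand of ${}_{A_{\pm}}A_{\pm}$, hence by Krull-Schmidt one of its two indecomposable projectives; the resulting ``fingerprint'' of $e$ (its class in $A_{+}$ and in $A_{-}$) is therefore an invariant of the isomorphism class of $Re$, and computing the images of $b$ shows $\tfrac{1\pm a}2,\tfrac{1\pm b}2$ have four distinct fingerprints, so the fingerprint determines the class of $Re$. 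Since $\overline{1-e}=1-\bar e$ and complementation swaps $\bar g_{\pm}$ and $\bar g'_{\pm}$, the fingerprint of $1-e$ is the swap of that of $e$; taking $\epsilon\in\{\tfrac{1\pm a}2,\tfrac{1\pm b}2\}$ with the fingerprint of $e$ then gives $Re\cong R\epsilon$ and $R(1-e)\cong R(1-\epsilon)$, whence $e\sim\epsilon$. Together with the two central classes of $0$ and $1$ this yields exactly six conjugacy classes of idempotents, hence of involutions.

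\emph{Expected obstacle.} Because $R$ is not semiperfect and Krull-Schmidt fails for ${}_RR$, the complement $R(1-e)$ cannot be recovered from $Re$ by a cancellation theorem; the technical heart will be the passage to the residue algebras $A_{\pm}$ --- establishing their structure, their idempotent theory, and the four fingerprints --- together with the bookkeeping of $\sigma$-stable fractional ideals of $\K[t^{\pm1}]$ that bounds the lattices $Re$.
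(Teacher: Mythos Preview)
Your outline is correct and follows a genuinely different route from the paper's. The paper works entirely inside the explicit embedding $\iota\colon R\hookrightarrow M_{2}(\K[t,t^{-1}])$: for an involution $u\neq\pm1$ the conditions $\operatorname{tr}u=0$ and $\det u=-1$ force $f^{*}=-f$ and $gg^{*}=1-f^{2}$; factoring $1\pm f$ and $g$ in the UFD $\K[t^{\pm1}]$ produces invariants $\varepsilon\in\{\pm1\}$ and $\theta\in\{0,1\}$, and the authors then write down by hand a unit $\nu\in R^{\times}$ with $\nu^{-1}u\nu=u_{\varepsilon,\theta}$. Pairwise non-conjugacy of the four standard involutions is established separately by looking at which one-dimensional simples $S_{\mu,\nu}$ are annihilated by $\Hom_{R}(eR,-)$.

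Your argument replaces this explicit conjugation by module theory over the centre $Z=\K[u]$: you classify the lattices $Re$ as $\sigma$-stable fractional $\K[t^{\pm1}]$-ideals (giving at most four isomorphism classes, indexed by essentially the same pair of invariants but reached via Hilbert~90--type bookkeeping rather than factorisation of $1\pm f$), and you then use the residue algebras $A_{\pm}=R/(u\mp2)R$ at the two ramified primes to pin down both $Re$ and $R(1-e)$ simultaneously through the ``fingerprint''. The key observation that complementation $e\mapsto1-e$ corresponds to swapping $\bar g_{\pm}\leftrightarrow\bar g'_{\pm}$ is what lets you pass from the module-isomorphism classification to conjugacy without a cancellation theorem. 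The paper's approach is more elementary and fully constructive --- one literally sees the conjugating unit --- while yours is more structural: the four classes are detected exactly at the two points where $t^{2}-ut+1$ acquires a double root, which makes the role of $\operatorname{char}\K\neq2$ and the source of the number ``four'' conceptually transparent, and suggests how the method might extend to other crossed products $\K[t^{\pm1}]\rtimes C_{2}$ or related orders.
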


The group $D_\infty$ and the algebra $R$ are interesting for many reasons, some of which have a
representation-theoretic nature. For example, the group $D_\infty$ admits every group presented by
two involutions as a quotient, including every finite dihedral group and the infinite cyclic group,
therefore the representation theory of $R$ is closely related to that of such groups.  Also, the
algebra $R$ is isomorphic to the free product $\K C_2 * \K C_2$ of two copies of the semisimple algebra $\K C_2$, 
the group algebra
of the cyclic $C_2$ of order 2.  It is interesting to note that among all free
products of group algebras of finite cyclic groups, $R$ stands out as the only algebra
with infinite representation type that has a global bound on the dimensions of the finite
dimensional simple modules; see \cite[Proposition 5.4]{dpwx}.  More generally, among all free
products of finitely many nontrivial semisimple $\K$-algebras, $R$ also stands out as the only such
algebra of tame representation type; see \cite[Theorem 7.3]{usra}. Furthermore, the category of finite
dimensional modules of $R$ can be realized as a subcategory of the category of regular modules over
the $4$-subspace quiver $\tilde D_4$, so that the finite dimensional indecomposable modules are all 
uniserial and the collection of simple modules consists of four one-dimensional modules together with a
one-parameter family of simple modules of dimension 2; see \cite[Example 7.2]{usra}.

Finitely generated modules of the algebra $R$ have also been studied extensively. In \cite{BB}, Berman and Buz\'asi
consider the infinite cyclic subgroup $C$ of $D_\infty$ generated by the element $ab$ and restrict a
given $\K D_\infty$-module to a $\K C$-module, allowing them to exploit the well-understood
representation theory of $\K C$. Among other things, this fruitful approach yields the following
facts. Up to isomorphism, there are only four indecomposable infinite dimensional finitely generated
(right) $R$-modules, namely, those of the form $eR$ where $e$ is one of the four idempotents
$(1-a)/2, (1+a)/2, (1-b)/2, (1+b)/2$. Moreover, these modules are all projective, and they are the only indecomposable modules having no $\K
C$-torsion.  

The results mentioned above immediately imply that $R$ has at least four conjugacy classes of primitive idempotents, because if two
idempotents $e$ and $e'\in R$ are conjugate then the modules $eR$ and $eR'$ are isomorphic.
However, an isomorphism $eR\cong eR'$ for idempotents $e,e'$ does not guarantee $e$ and $e'$ are
conjugate (see \cite[Lemma 2.2]{idem}), so it is not obvious whether $R$ has exactly four conjugacy
classes of primitive idempotents. Theorem \ref{thm:main} answers this question affirmatively. Combining Proposition~\ref{noniso} with Theorem \ref{thm:main}, we prove the fact that $R$ has exactly four isomorphism classes of indecomposable projective modules of the form $eR$ where $e$ is an idempotent. We note that this fact follows from Theorem 6 of \cite{BB}, whose proof involves careful analysis of restrictions of $\K D_\infty$-modules to $\K C$, but the proof given here is shorter and more elementary.

\section{Preliminaries and main result}

Let $R=\K\ip{a,b\,\vert\, a^2=b^2=1}$. We want to classify the involutions of
$R$ up to conjugacy. Doing so is equivalent to classifying the idempotents of
$R$ up to conjugacy, because an element $r\in R$ is an idempotent if and only
if $2r-1$ is an involution, and two elements $r,r'\in R$ are conjugate if and
only if $2r-1$ and $2r'-1$ are conjugate.

Let $s=a, t=ab$ and let $\cala=\K[t,t\inverse]$ be the corresponding ring of Laurent polynomials. Observe that $R$ can also be
presented as $R=\K\ip{s,t:s^2=1,ts=st\inverse}$. In particular, $R$ is spanned by elements of form $t^n$
and $st^n$ where $n\in \Z$, so every element $u\in R$ can be written in the
form $f(t)+sg(t)$ for some $f,g\in
\cala$. It is straightforward to check that $R$ embeds into $M_{2\times 2}(\cala)$
via the injective homomorphism 
\begin{equation} \label{eq55}
  \iota: R\ra M_{2\times 2}(\cala), \quad f(t)+sg(t)\mapsto
  \begin{bmatrix}
    f(z) & g^*(z)\\
    g(z) & f^*(z)
  \end{bmatrix}
\end{equation}
where $*:\cala\ra\cala$ is the involutive map defined by $h^*(z)=h(1/z)$ for
all $h\in \cala$. We will henceforth often identify each 
$u\in R$ with its image $\iota(u)\in M_{2\times 2}(\cala)$. 

With the identification provided by \eqref{eq55}, Theorem~\ref{thm:main} can now be rephrased as follows.
\begin{thm} \label{th2.44}
  %Using the above embedding, t
  The involutions of $R$ fall into six conjugacy classes. These
  classes are represented by the elements 
\[
  1=
  \begin{bmatrix}
    1 & 0 \\
   0 &  1
  \end{bmatrix},
   s=
  \begin{bmatrix}
    0 & 1\\
    1 & 0\\
  \end{bmatrix},
   st=
  \begin{bmatrix}
    0 &  1/t\\
    t & 0
  \end{bmatrix},
\]
and their additive inverses $-1,-s,-st$; in other words, the classes are represented by the elements $1,-1,a,-a,b,-b$.
\end{thm}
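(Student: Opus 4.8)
The plan is to classify idempotents up to conjugacy, which is equivalent since $r\mapsto 2r-1$ is a bijection between idempotents and involutions that commutes with conjugation (recall $\tfrac12\in\K$); the six claimed involutions correspond to the idempotents $1,0,\tfrac{1+a}2,\tfrac{1-a}2,\tfrac{1+b}2,\tfrac{1-b}2$. Writing $u=f(t)+sg(t)$ with $f,g\in\cala$, the relation $st=t\inverse s$ gives $u^2=f^2+gg^*+s\,g(f+f^*)$, so $u^2=1$ forces $f^2+gg^*=1$ and, since $\cala$ is a domain, either $g=0$ — then $f^2=1$, so $f=\pm1$ and $u=\pm1$ — or $f^*=-f$ and $gg^*=1-f^2$. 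In the latter case the idempotent $e=\tfrac12(1+u)$ has $\iota(e)=\tfrac12\bm{1+f&g^*\\ g&1-f}$, a nonzero idempotent matrix of trace $1$ and determinant $0$, so its image $\iota(e)\cala^2$ is a rank-one direct summand of $\cala^2$, hence free of rank one because $\cala=\K[t,t\inverse]$ is a PID. I will attach to such an $e$ a conjugacy invariant taking exactly four values, realized by the four standard idempotents.

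Set $\tau:=\iota(s)=\bm{0&1\\1&0}$ and, for a matrix $M$ over $\cala$, let $\overline M$ be the result of applying $*$ to each entry. One checks that $\iota(R)=\{M\in M_{2\times 2}(\cala):\tau\overline M\tau=M\}$; in particular every $v\in\iota(R)^\times$ satisfies $\tau\overline v=v\tau$. Let $w=\binom{x}{y}$ generate $\iota(e)\cala^2$; as this is a summand of $\cala^2$, $w$ is unimodular. The identity $\tau\overline{\iota(e)}\tau=\iota(e)$ gives $\cala w=\cala\,(\tau\overline w)$, so $\tau\overline w=\lambda w$ for a unit $\lambda\in\cala^\times$; applying $\tau\overline{(\,\cdot\,)}$ once more yields $\lambda^*\lambda=1$, whence $\lambda=\pm t^{m}$. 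Replacing $w$ by $\mu w$ replaces $\lambda$ by $(\mu^*/\mu)\lambda$, and $\mu^*/\mu$ ranges over $\{t^{2\ell}:\ell\in\Z\}$, so the class of $\lambda$ in the four-element set $\{1,-1,t,-t\}$ is well defined, and since $\tau\overline v=v\tau$ for $v\in\iota(R)^\times$ it is unchanged when $e$ is conjugated by a unit of $R$. A direct computation shows that $\iota(\tfrac{1+a}2),\iota(\tfrac{1-a}2),\iota(\tfrac{1+b}2),\iota(\tfrac{1-b}2)$ realize the classes $1,-1,t\inverse,-t\inverse$ respectively; as these are the four distinct classes, and since $0,1$ correspond to idempotents of ranks $0$ and $2$, we already obtain six pairwise non-conjugate idempotents.

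For the reverse direction I will also choose a generator $w''$ of $(1-\iota(e))\cala^2$ and put $P:=(w\mid w'')\in GL_2(\cala)$, so that $\iota(e)=P\bm{1&0\\0&0}P\inverse$ and $P\inverse\tau\overline P=\operatorname{diag}(\lambda,\lambda'')$ where $\tau\overline{w''}=\lambda''w''$. Taking determinants gives $\lambda\lambda''=-(\det P)^*/\det P\in-\{t^{2\ell}\}$, so the class of $\lambda''$ equals that of $-\lambda$ and is determined by that of $\lambda$. Let $e_0$ be the standard idempotent whose $\lambda$-class equals that of $e$, with data $P_0,\lambda_0,\lambda_0''$. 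Rescaling $w$ and $w''$ by suitable powers of $t$, I may assume $\lambda=\lambda_0$ and $\lambda''=\lambda_0''$, so that $P\inverse\tau\overline P=P_0\inverse\tau\overline{P_0}$; a short manipulation then shows $Q:=PP_0\inverse$ satisfies $\tau\overline Q=Q\tau$, i.e.\ $Q\in\iota(R)^\times$, and $Q\,\iota(e_0)\,Q\inverse=\iota(e)$, hence $e\sim e_0$. Combined with the previous paragraph, this shows $R$ has exactly six conjugacy classes of idempotents, represented by $1,0,\tfrac{1+a}2,\tfrac{1-a}2,\tfrac{1+b}2,\tfrac{1-b}2$, equivalently six classes of involutions represented by $1,-1,a,-a,b,-b$.

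I expect the main obstacle to be the linear-algebra bookkeeping around the invariant $\lambda$: checking $\iota(R)=\{M:\tau\overline M\tau=M\}$, that $\lambda$ is well defined modulo the "norms" $\mu^*/\mu$ and conjugation-invariant, and — the crux — the identity $P\inverse\tau\overline P=\operatorname{diag}(\lambda,\lambda'')$ together with the observation that matching diagonal data forces the transition matrix $Q=PP_0\inverse$ into $\iota(R)^\times$. Everything else (the formula for $u^2$, the rank computation, freeness of $\iota(e)\cala^2$ over the PID $\cala$, and the evaluation of $\lambda$ on the four standard idempotents) should be routine. A slightly more computational alternative to the third paragraph would instead use a B\'ezout identity for $\gcd(x,x^*)=1$ to build a unit of $\iota(R)$ — its determinant corrected by a symmetric element so as to be invertible — carrying $\iota(e)\cala^2$ onto $\iota(e_0)\cala^2$, and then conjugate by $1+(\iota(e)-\iota(e_0))$, using that two $\tau\overline{(\,\cdot\,)}\tau$-fixed rank-one idempotents with the same image $J$ satisfy $ee_0=e_0$ and $e_0e=e$, so that $\iota(e)-\iota(e_0)$ is nilpotent.
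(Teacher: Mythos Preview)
Your proof is correct and takes a genuinely different route from the paper's. The paper argues computationally: it factorizes $1\pm f$ and $g$ in the UFD $\cala$, reads off invariants $\varepsilon\in\{\pm1\}$ and $\theta\in\{0,1\}$ from the unit parts of these factorizations, and then writes down an explicit (and fairly elaborate) conjugating element $\nu\in R^\times$ with $\nu^{-1}u\nu=u_{\varepsilon,\theta}$; the pairwise non-conjugacy of the six representatives is established separately, via the projective modules $eR$. You instead identify $\iota(R)$ as the fixed subring of the involution $M\mapsto\tau\overline M\tau$ on $M_{2\times 2}(\cala)$, attach to each nontrivial idempotent the rank-one summand $\iota(e)\cala^2$ together with its induced semilinear structure $w\mapsto\tau\overline w$, and extract the invariant as a class in $\{\lambda\in\cala^\times:\lambda\lambda^*=1\}/\{\mu^*/\mu:\mu\in\cala^\times\}\cong\{\pm1\}\times\Z/2\Z$; the diagonalization $P^{-1}\tau\overline P=\operatorname{diag}(\lambda,\lambda'')$ then shows that matching invariants forces $Q=PP_0^{-1}\in\iota(R)^\times$. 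Your argument is more conceptual (essentially a Galois-descent\,/\,$H^1$ computation), simultaneously separates and exhausts the nontrivial classes, and in fact never uses that $\K$ is algebraically closed; the paper's approach is more elementary in its prerequisites (unique factorization rather than module theory over a PID) and has the advantage of producing an explicit formula for the conjugating unit.
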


Corresponding to the non-trivial involutions we have the following four
non-trivial idempotents: 
$$e=\frac{1-a}{2}, \quad e'=\frac{1-b}{2}, \quad 1-e=\frac{1+a}{2}, \quad 1-e'=\frac{1+b}{2}\ .$$
We will use this notation for these idempotents for the remainder of the paper.

By the idempotent-involution correspondence, the following is now immediate.

\begin{corollary}
    There are precisely four conjugacy classes of primitive idempotents in $R$, namely, the classes represented by $e, 1-e, e', 1-e'$. 
\end{corollary}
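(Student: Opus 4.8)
The plan is to read off the conjugacy classes of primitive idempotents from the six conjugacy classes of involutions furnished by Theorem~\ref{thm:main}. Recall from the start of Section~2 that $r \mapsto 2r-1$ is a bijection between the idempotents and the involutions of $R$ under which two elements are conjugate if and only if their images are; consequently Theorem~\ref{thm:main} is equivalent to the statement that $R$ has exactly six conjugacy classes of idempotents, with representatives $0, 1, e, 1-e, e', 1-e'$, the respective preimages of $-1, 1, -a, a, -b, b$. It therefore remains only to determine which of these six classes consist of primitive idempotents.

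Two of them do not. The zero idempotent is by definition not primitive, and the identity is not primitive because $1 = e + (1-e)$ realizes it as a sum of two nonzero orthogonal idempotents: indeed $e(1-e) = (1-e)e = 0$, while $e$ and $1-e$ are nonzero since $1$ and $a$ are distinct basis vectors of $R$. Moreover, primitivity is a conjugation-invariant property: conjugating an orthogonal decomposition $f = f_1 + f_2$ by a unit $u$ yields the orthogonal decomposition $ufu\inverse = uf_1u\inverse + uf_2u\inverse$, and $uf_iu\inverse = 0$ if and only if $f_i = 0$. Hence no primitive idempotent is conjugate to $0$ or to $1$, so every primitive idempotent is conjugate to one of $e, 1-e, e', 1-e'$.

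The remaining and only non-formal point is that $e, 1-e, e', 1-e'$ are themselves primitive, i.e.\ that the relevant corner rings have no nontrivial idempotents. For $e$ I would compute $eRe$ via the embedding $\iota$ of \eqref{eq55}: since $\iota(e) = \tfrac12\bm{1 & -1 \\ -1 & 1}$, multiplying $\iota(f(t)+sg(t))$ on both sides by $\iota(e)$ gives, after a short calculation,
\[
  e\bigl(f(t)+sg(t)\bigr)e \;=\; \tfrac12\bigl(f + f^* - g - g^*\bigr)\,e ,
\]
so $eRe$ is isomorphic, as a ring, to the $*$-invariant subring $\{h \in \cala : h^* = h\} = \K[t + t\inverse]$ of $\cala$. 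The latter is a polynomial ring in one variable, hence an integral domain, so its only idempotents are $0$ and $1$; translating back, $0$ and $e$ are the only idempotents of $eRe$, and $e$ is primitive. The identical computation with $\iota(1-e) = \tfrac12\bm{1 & 1 \\ 1 & 1}$ handles $1-e$, and the automorphism of $R$ interchanging $a$ and $b$ (well defined because swapping $a$ and $b$ respects the relations $a^2 = b^2 = 1$, and carrying $e$ to $e'$ and $1-e$ to $1-e'$) handles $e'$ and $1-e'$. Alternatively one may simply invoke the fact, recalled in the introduction, that $eR$, $(1-e)R$, $e'R$, $(1-e')R$ are indecomposable modules, since an idempotent $f$ is primitive precisely when $fR$ is indecomposable.

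Assembling the pieces: exactly four of the six conjugacy classes of idempotents consist of primitive idempotents, namely the classes of $e, 1-e, e', 1-e'$, and these four are pairwise distinct because the involutions $-a, a, -b, b$ lie in distinct conjugacy classes by Theorem~\ref{thm:main}. I do not anticipate a genuine obstacle here; the one step carrying real content is the identification $eRe \cong \K[t+t\inverse]$ (or else the appeal to the indecomposability of the projectives $eR$), and it is this that makes the corollary "immediate" once Theorem~\ref{thm:main} is available.
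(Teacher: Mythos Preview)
Your proposal is correct and follows the same outline the paper intends: pass from the six involution classes of Theorem~\ref{thm:main} to six idempotent classes via $r\mapsto 2r-1$, discard $0$ and $1$, and identify the remaining four as the primitive classes. The paper's entire argument for the corollary is the single sentence ``By the idempotent-involution correspondence, the following is now immediate,'' so you are not diverging from it so much as supplying the details it leaves to the reader.

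The one place you go beyond the paper is your verification that $e,1-e,e',1-e'$ are actually primitive. The paper never proves this directly; it implicitly relies on the indecomposability of $eR$, $(1-e)R$, $e'R$, $(1-e')R$ quoted from \cite{BB} in the introduction (and your alternative appeal to this is exactly in that spirit). Your direct computation $eRe\cong\{h\in\cala:h^*=h\}=\K[t+t^{-1}]$, an integral domain, is a nice self-contained addition that avoids the external reference; the calculation is correct and the symmetry argument via the automorphism swapping $a$ and $b$ cleanly handles $e'$ and $1-e'$.
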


Next we study the projective modules of the form $xR$ where $x$ is an idempotent.  

\begin{Proposition} \label{noniso}
    There are exactly four pairwise non-isomorphic indecomposable projective modules of the form $xR$ where $x$ is an idempotent. They are the modules $eR, (1-e)R, e'R$ and $(1-e')R$ where $e$ and $e'$ are as in Corollary 3.
\end{Proposition}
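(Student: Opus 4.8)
The plan is to show that the four modules $eR$, $(1-e)R$, $e'R$, $(1-e')R$ are pairwise non-isomorphic, and that any indecomposable projective of the form $xR$ with $x$ an idempotent is isomorphic to one of these four. For the second point, note that an idempotent $x$ is, up to conjugacy, one of the six involution-representatives divided appropriately, i.e. $x$ is conjugate to one of $0, 1, e, 1-e, e', 1-e'$ by Theorem~\ref{thm:main} (via the idempotent--involution correspondence). Since conjugate idempotents $x \sim x'$ give isomorphic modules $xR \cong x'R$ (conjugation by a unit is an inner automorphism of $R$, which restricts to an isomorphism of right modules), every $xR$ is isomorphic to one of $0, R, eR, (1-e)R, e'R, (1-e')R$. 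The module $R = 1\cdot R$ is decomposable since $R = eR \oplus (1-e)R$, and the zero module is not indecomposable, so the only candidates for indecomposable $xR$ are the four listed. It remains to check these four are genuinely indecomposable and pairwise non-isomorphic.

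For indecomposability, I would appeal to the fact recalled in the introduction (from \cite{BB}): the modules $eR, (1-e)R, e'R, (1-e')R$ are precisely the four indecomposable infinite-dimensional finitely generated right $R$-modules, hence in particular indecomposable; alternatively one can verify directly that the idempotents $e, 1-e, e', e'$ are primitive, e.g. by computing that $eRe$ has no nontrivial idempotents (using the matrix model $\iota$, one checks $eRe$ is a commutative ring with no zero divisors other than the obvious ones, or argues via the $\K C$-restriction). For the pairwise non-isomorphism, the cleanest route is the standard fact that for idempotents $x, y$, one has $xR \cong yR$ as right $R$-modules if and only if $Rx \cong Ry$ as left modules if and only if $x$ and $y$ generate the same two-sided ideal after passing to $\mathrm{End}$, but more concretely $\Hom_R(xR, yR) \cong yRx$; so I would compute the four ``corner'' spaces and their composition to see that none of the relevant composites $yRx \cdot xRy \to yRy$ hits the identity of $yRy$ unless $x = y$. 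A convenient way to make this explicit is to use the images of $e, 1-e, e', 1-e'$ under $\iota$ in $M_{2\times2}(\cala)$ and observe, for instance, that $eRe'$ consists of matrices whose entries lie in $(1-t)\cala$ or similar proper ideals, so that no element of $eRe' \cdot e'Re$ can equal the primitive idempotent $e$.

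The main obstacle I expect is the non-isomorphism argument: unlike in a finite-dimensional or local setting, $\Hom$-spaces between the $xR$ can be infinite-dimensional over $\K$, so one cannot simply count dimensions. The key computation is to pin down, using the embedding $\iota$ of \eqref{eq55}, exactly which $2\times 2$ matrices over $\cala$ lie in each corner $xRy$, and then to show the obstruction ideal (the image of the composition $xRy \otimes_R yRx \to xRx$) is a proper ideal of the commutative-looking ring $xRx$ whenever $x \neq y$ among our four idempotents. Once the corner rings $xRx \cong \mathrm{End}_R(xR)$ are identified (they should each be isomorphic to $\cala$ or a subring thereof, hence commutative with no idempotents besides $0$ and $x$, reconfirming primitivity), the non-isomorphism follows because an isomorphism $xR \cong yR$ would force the composite to be a unit of $xRx$, contradicting its landing in a proper ideal. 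I would organize the write-up so that the matrix descriptions of $eRe, eR(1-e), eRe', \dots$ are tabulated once and then the six pairwise comparisons are dispatched uniformly.
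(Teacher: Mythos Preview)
Your overall plan is sound, and the reduction via Theorem~\ref{thm:main} to the four candidates is exactly how the paper sets things up (it is built into the statement via the reference to Corollary~3, which already gives primitivity and exhaustiveness). Where you diverge from the paper is in the non-isomorphism step, and there the paper's argument is far shorter than what you propose.

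Instead of analysing the corner bimodules $xRy$ and the image of the multiplication map $xRy\otimes yRx\to xRx$, the paper simply tests each projective against the four one-dimensional simple modules $S_{\mu,\nu}$ (where $a$ acts by $\mu\in\{\pm1\}$ and $b$ by $\nu\in\{\pm1\}$). Since $\Hom_R(xR,M)\cong Mx$, one has for instance $\Hom_R(eR,S_{\mu,\nu})\cong S_{\mu,\nu}\cdot\frac{1-a}{2}$, which is zero iff $\mu=1$; the four projectives then have four distinct vanishing patterns on $\{S_{\mu,\nu}\}$, and that is the entire proof. No matrix computations, no ideals, no infinite-dimensional $\Hom$-spaces to worry about.

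Your route via corners would ultimately work, but it is genuinely heavier: identifying each $xRx$ with (a subring of) $\cala$ and then pinning down the image of $xRy\cdot yRx$ inside it requires the explicit matrix forms of $e,1-e,e',1-e'$ under $\iota$ and a case analysis you only sketch. The guess that ``entries lie in $(1-t)\cala$'' is the right flavour but would need justification. The paper's trick of probing with one-dimensional characters buys you the same conclusion with essentially no computation, and also keeps the argument independent of \cite{BB}, which matters since the paper is aiming to reprove part of that result rather than cite it.
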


\begin{proof}
For $\mu, \nu \in \{-1,1\}$, we can define a simple $R$ module $S_{\mu, \nu}$ of dimension one for which $a$ acts by multiplication by $\mu$ and $b$ acts by multiplication by $\nu$. The following hold.
\begin{enumerate}
    \item $\Hom_R(eR,-)$ vanishes on $S_{1,1}$ and $S_{1,-1}$;
    \item $\Hom_R((1-e)R,-)$ vanishes on $S_{-1,1}$ and $S_{-1,-1}$;
    \item $\Hom_R(e'R,-)$ vanishes on $S_{1,1}$ and $S_{-1,1}$; and
    \item $\Hom_R((1-e')R,-)$ vanishes on $S_{1,-1}$ and $S_{-1,-1}$.
\end{enumerate}
These facts immediately imply the desired result.
\end{proof}

Notice that from the equality
\begin{equation}\label{noKrullSchmidt}
eR \oplus (1-e)R = e'R \oplus (1-e')R,
\end{equation}
we have two distinct decompositions of the right $R$-module $R$ into a direct sum of indecomposable modules. In particular, the category of finitely generated (projective) right $R$-modules does not satisfy the Krull-Schmidt decomposition theorem. The relation \eqref{noKrullSchmidt} appears in \cite[Remark 2]{BB}. Moreover, \cite[Theorem 4]{BB} 
states that the Grothendieck group $K_0(R)$ of $R$ is a free abelian group of rank 3, 
and \eqref{noKrullSchmidt} is the only relation among its generators.

\section{Proof of the theorem}
Throughout this section, we use the embedding $\iota$ from (\ref{eq55}) to identify $R$ with its image in $M_{2 \times 2}(\mathcal{A})$.  Let 
\begin{equation}
\label{u eps theta}
        u_{\eps,\theta}=
        \begin{bmatrix}
          0 & \eps t^{-\theta}\\
          \eps t^\theta & 0
        \end{bmatrix}.
      \end{equation}
      The set $\{u_{\eps,\theta}: \eps\in
      \{1,-1\},\theta\in\{0,1\}\}$ coincides with the set
      $\{s,-s,st,-st\}$. To prove Theorem \ref{th2.44}, it suffices to prove that every element $u$ of $R$ such that
      $u^2 =1$ and $u \neq \pm 1$
      is conjugate to exactly one element of the set $\{u_{\eps,\theta}: \eps\in
      \{1,-1\},\theta\in\{0,1\}\}$.

Let $u \in R$ be as above. Since $u^2=1$, the minimal polynomial $m_x(u)$ of $u$
divides $x^2-1=(x-1)(x+1)$, therefore $m_x(u)$ equals $x-1, x+1$ or $x^2-1$. In
the first two cases we have $u=1$ and $u=-1$, respectively, so it suffices to
assume $m_{x}(u)=x^2-1$ and show that $u$ is conjugate to one of the four elements $\pm s, \pm
st$. 

Let $Q=\K(t)$ denote the field of fractions of $\cala$. 
Since $u\in M_{2\times 2}(Q)$ and $m_x(u)=x^2-1$, the characteristic polynomial of $u$ equals $x^2-1$,
therefore $u$ has trace 0 and determinant $-1$.
It follows that $u$ is of the form 
\[
u=
\begin{bmatrix}
  f & g^*\\
  g & f^*
\end{bmatrix}
\]
where $f,g\in \cala$, $f^*=-f$ and $ff^{*}-gg^*=-1$. 

Recall that the ring $\cala$ is
a unique factorization domain with units given by $\cala^\times=\{\lambda t^m: \lambda\in
\C^\times, m\in \Z\}$. Consider the unique prime factorization of $1+f$ given by 
\begin{equation} 
  \label{1plusf}
  1+f=\delta t^m \prod_{i=1}^n p_i,
\end{equation}
where $\delta\in \C^\times, m\in \Z$ and $p_i$ is a prime element in $\cala$
for each $i\in [n]$. 
Then as $f^*=-f$, we have 
\begin{equation} 
  \label{1minusf}
  1-f=1+f^*=(1+f)^*=\delta t^{-m}\prod_{i=1}^n p_i^*
\end{equation}
and hence 
\begin{equation}
  \label{1-f^2}
  1+ff^*=1-f^2=(1+f)(1-f)=\delta^2 \prod_{i=1}^n p_ip_i^*.
\end{equation}
Since $gg^*=1+ff^*$, it further follows that there is a subset $I$ of
$[n]$ such that 
\begin{equation}
  \label{gg*}
  g=\gamma t^l \prod_{i\in I}p_i\prod_{i\in I^c}p_i^*, \quad
  g^*=\gamma t^{-l} \prod_{i\in I}p_i^*\prod_{i\in I^c}p_i
\end{equation}
where $\gamma\in \C^\times, l\in \Z$ and $I^c=[n]\setminus I$.
Note that
$\delta^2=\gamma^2$ by equations \eqref{1-f^2} and \eqref{gg*}, so
$\gamma=\varepsilon\delta$ and $\delta=\varepsilon\gamma$ with
 $\varepsilon \in \{1, -1\}$. Let $\theta$ be the unique integer in $\{0,1\}$ with the same parity as
$l+m$. Note that both the parameters $\varepsilon$ and $\theta$ are uniquely
determined by $f$ and $g$ and hence by $u$.

Next we show that $u$ is conjugate to $u_{\eps,\theta}$ by 
constructing $\nu\in R^\times$ such that $\nu\inverse u\nu=u_{\eps,\theta}$. To define $\nu$, let 
\begin{equation}
  g_1=\varepsilon  t^{\frac{l+m+\theta}{2}}\prod_{i\in I}p_i, \quad
  g_2= \delta t^{\frac{l-m-\theta}{2}}\prod_{i\in I^c}p_i^*,
  \label{g1g2}
\end{equation}
let 
\begin{equation} 
  a=\dfrac{\eps t^\theta g_2+g_1}{2g}, \quad b=\dfrac{\eps t^{\theta}
  g_2-g_1}{2g}, 
  \label{ab def}
\end{equation}
and let 
\begin{equation}
  \label{nu def}
\nu=
\begin{bmatrix}
  \eps(a+bf) & bt^{-\theta}+at^{-\theta}f\\
  \eps bg & at^{-\theta}g
\end{bmatrix}\in M_{2\times 2}(Q).
\end{equation}

With these, we can now establish the following.

\begin{prop} \label{PropositionInvolutions}
  The element $\nu\in M_{2\times 2}(Q)$ satisfies the following properties.
  \begin{enumerate}
    \item $\nu\in \iota(R)$.
    \item $\nu$ is invertible in $M_{2\times 2}(Q)$, and its inverse
      $\nu\inverse$ in $M_{2\times 2}(Q)$ is also in
      $\iota(R)$.
    \item $\nu\inverse u\nu=u_{\eps,\theta}$.
  \end{enumerate}
\end{prop}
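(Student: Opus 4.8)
The plan is to verify the three claims in order, since (2) and (3) both rely on (1) being established first.

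For part (1), the strategy is to show that $\nu$, written as a $2\times 2$ matrix over $Q$, actually lies in the image of $\iota$. Recall that a matrix $\begin{bmatrix} p & q \\ r & s \end{bmatrix} \in M_{2\times 2}(Q)$ lies in $\iota(R)$ precisely when all four entries are in $\cala$, $s = p^*$ and $q = r^*$. So the first task is to check the entries of $\nu$ are Laurent polynomials and satisfy these two symmetry relations. The key computational input is the explicit factorizations: from \eqref{g1g2} we have $g_1 g_2 = \eps\delta t^{l} \prod_{i\in I} p_i \prod_{i \in I^c} p_i^* = \eps \gamma \cdot \eps^{-1} \cdot (\text{adjust powers of } t)$, and comparing with \eqref{gg*} one sees $g_1 g_2 = g$ up to a unit; more precisely the definitions of $g_1, g_2$ are rigged so that $g_1 g_2 = \eps t^\theta g$ (using $\gamma = \eps\delta$ and that the exponents $\frac{l+m+\theta}{2}$ and $\frac{l-m-\theta}{2}$ are integers by the parity choice of $\theta$ and sum to $l$). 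Granting this, the quantities $a = \frac{\eps t^\theta g_2 + g_1}{2g}$ and $b = \frac{\eps t^\theta g_2 - g_1}{2g}$ simplify: $ag = \frac{\eps t^\theta g_2 + g_1}{2}$ has $2ag \cdot$ nothing — wait, more usefully $at^{-\theta} g$, $bg$, $a + bf$, $bt^{-\theta} + at^{-\theta}f$ should all collapse to Laurent polynomials once one substitutes and uses $g_1 g_2 = \eps t^\theta g$ together with the factorization $1 - f^2 = gg^*$. For the symmetry relations, one computes $g_1^* $ and $g_2^*$ from \eqref{g1g2} and checks, e.g., that $\eps b g$ and $at^{-\theta}g$ are interchanged by $*$, and similarly for the top row; here one also uses $f^* = -f$.

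For part (2), the cleanest route is to compute $\det \nu$ directly and show it is a unit in $\cala$ (a nonzero scalar times a power of $t$), since any matrix in $\iota(R)$ with unit determinant has its inverse again in $\iota(R)$ — indeed the adjugate has the right shape and dividing by a unit of $\cala$ preserves membership. A direct expansion gives $\det\nu = \eps(a+bf)\cdot at^{-\theta}g - (bt^{-\theta}+at^{-\theta}f)\cdot \eps b g = \eps t^{-\theta} g\big( a^2 + abf - b^2 - abf\big) = \eps t^{-\theta} g (a^2 - b^2) = \eps t^{-\theta} g (a-b)(a+b)$. Now $a + b = \frac{\eps t^\theta g_2}{g}$ and $a - b = \frac{g_1}{g}$, so $(a-b)(a+b) = \frac{\eps t^\theta g_1 g_2}{g^2} = \frac{\eps t^\theta \cdot \eps t^\theta g}{g^2} = \frac{t^{2\theta}}{g}$, whence $\det \nu = \eps t^{-\theta} g \cdot \frac{t^{2\theta}}{g} = \eps t^\theta$, a unit of $\cala$. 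This simultaneously shows $\nu$ is invertible over $Q$ and that $\nu^{-1} \in \iota(R)$.

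For part (3), the identity $\nu\inverse u \nu = u_{\eps,\theta}$ is equivalent to $u\nu = \nu u_{\eps,\theta}$, which avoids inverting $\nu$ and reduces to a $2\times 2$ matrix identity over $\cala$ that can be checked entrywise. Both sides are computed using $u = \begin{bmatrix} f & g^* \\ g & f^* \end{bmatrix}$, the formula \eqref{nu def} for $\nu$, and \eqref{u eps theta} for $u_{\eps,\theta} = \begin{bmatrix} 0 & \eps t^{-\theta} \\ \eps t^\theta & 0\end{bmatrix}$; the four resulting scalar equations reduce, after substituting \eqref{ab def} and clearing the common denominator $g$, to polynomial identities that follow from $g_1 g_2 = \eps t^\theta g$ and $f f^* - g g^* = -1$ (equivalently $1 - f^2 = gg^*$, using $f^* = -f$). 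I expect the main obstacle to be purely bookkeeping: keeping the powers of $t$ and the signs straight through the substitutions, and in particular being careful that the half-integer-looking exponents in \eqref{g1g2} are genuinely integers (which is exactly why $\theta$ was chosen with the parity of $l+m$) and that the prime factorizations telescope correctly. There is no conceptual difficulty once $g_1 g_2 = \eps t^\theta g$ is nailed down; everything else is a matter of organized verification, so I would state that identity as the first lemma of the proof and then push the three parts through in the order (1), (2), (3).
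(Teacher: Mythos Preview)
Your strategy is exactly the paper's, but the ``key lemma'' you single out is miscomputed: from \eqref{g1g2} the $t$-exponents of $g_1$ and $g_2$ sum to $l$, and $\eps\delta=\gamma$, so in fact
\[
g_1g_2=\eps\delta\, t^{l}\prod_{i\in I}p_i\prod_{i\in I^c}p_i^*=\gamma t^l\prod_{i\in I}p_i\prod_{i\in I^c}p_i^*=g,
\]
not $\eps t^\theta g$. With the correct identity your determinant computation becomes $(a+b)(a-b)=\eps t^\theta g_1^{-1}g_2^{-1}=\eps t^\theta/g$, hence $\det\nu=1$, which is what the paper obtains. Your value $\eps t^\theta$ should already look suspicious: for any $\nu\in\iota(R)$ the determinant has the form $pp^*-rr^*$ and is therefore $*$-invariant, whereas $\eps t^\theta$ is not when $\theta=1$.

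For part (1) you have the correct characterization of $\iota(R)$ but then pair the wrong entries: it is \emph{not} $\eps bg$ and $at^{-\theta}g$ (the two bottom entries) that are interchanged by~$*$. What must be checked is $(at^{-\theta}g)^*=\eps(a+bf)$ and $(\eps bg)^*=bt^{-\theta}+at^{-\theta}f$, i.e., each bottom entry is the $*$ of the entry diagonally or vertically above it. The paper carries this out via the auxiliary identities $\tfrac{1+f}{g_1}=\eps t^{-\theta}g_2^*$ and $\tfrac{1-f}{g_2}=\eps t^{\theta}g_1^*$ (its equation \eqref{quotients}), which is where the prime factorizations of $1\pm f$ genuinely meet the definitions of $g_1,g_2$; the identity $g_1g_2=g$ alone, together with $1-f^2=gg^*$, does not suffice to close part (1), so you will need to derive and use those two quotient identities as well. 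Part (3) is fine: as you say, $u\nu=\nu u_{\eps,\theta}$ follows from $f^*=-f$ and $ff^*-gg^*=-1$ alone, and in fact the relation $g_1g_2=g$ is not needed there.
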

\begin{proof}
  (1) To prove $\nu\in\iota(R)$ it suffices to show that $at^{-\theta}g\in \cala,
  (at^{-\theta}g)^*=\eps(a+bf), \eps bg\in \cala$
 and
  $(\eps bg)^*=bt^{-\theta}+at^{-\theta}f$.
Note that it follows from Equation (\ref{ab def}) that
\[
  at^{-\theta}g=\frac{\eps g_2 +t^{-\theta}
g_1}{2},
\]
therefore $at^{-\theta}g\in \cala$
and \[
  (at^{-\theta}g)^*=\frac{\eps g_2^*+t^{\theta}g_1^*}{2}.
\]
Also, from equations (\ref{1plusf}) and (\ref{1minusf}), it follows that
\begin{equation}
  \label{quotients}
  \frac{1+f}{g_1}= \gamma t^{\frac{m-l-\theta}{2}}\prod_{i\in I^c}p_i=\eps
  t^{-\theta}g_2^*,\quad
  \frac{1-f}{g_2}=t^{\frac{-l-m+\theta}{2}}\prod_{i\in I}p_i^*=\eps
  t^{\theta}g_1^*.
\end{equation}
Combining (\ref{ab def}), \eqref{quotients}, and the fact that $g_1g_2=g$, we get  
\[
  \eps(a+bf)=\frac{t^{\theta}g_2+\eps g_1+ t^\theta g_2 f-\eps g_1f}{2g}
  =\dfrac{t^\theta \frac{1+f}{g_1}+\eps \frac{1-f}{g_2}}{2}=\frac{\eps
    g_2^*+t^{\theta}g_1^*}{2}. 
\]
 Therefore, we have 
\[
(at^{-\theta}g)^*=\eps(a+bf).
\]
Using (\ref{ab def}) again yields
\[
  \eps bg=\dfrac{t^{\theta}g_2-\eps g_1}{2}\in \cala .
\]
and using (\ref{ab def}) with (\ref{quotients}) yields
\begin{eqnarray*}
bt^{-\theta} + at^{-\theta}f & =& \dfrac{\eps g_2 - t^{-\theta}g_1 + \eps f g_2 + t^{-\theta}f g_1}{2g}\\ &=& \dfrac{\eps g_2 (\frac{1+f}{g}) - t^{-\theta}g_1(\frac{1-f}{g})}{2}\\ & = & 
\dfrac{\eps (\frac{1+f}{g_1}) - t^{-\theta}(\frac{1-f}{g_2})}{2}\\ & = &\dfrac{t^{-\theta}g_2^*-\eps
g_1^*}{2} \\
& = & (\eps b g)^*,
\end{eqnarray*}
which completes our proof of the fact that $\nu\in \iota(R)$.

\medskip
  
(2) A straightforward calculation shows that 
\[
  \det \nu=\eps t^{-\theta}g (a^2-b^2)=\eps
  t^{-\theta}g(a+b)(a-b).
\]
Since $g_1g_2=g$, it follows from \eqref{ab def} that
\[
  a+b=\eps t^\theta g_1\inverse, \quad a-b=g_2\inverse
\]
and hence $\det\nu=1$. It further follows
that the inverse $\nu\inverse$ of $\nu$ in $M_{2\times 2}(Q)$ is given by
\[
  \nu\inverse=\frac{1}{\det \nu}
  \begin{bmatrix}
    at^{-\theta}g &-(bt^{-\theta}+at^{-\theta}f)\\
     -\eps bg &  \eps(a+bf) 
  \end{bmatrix}
=
  \begin{bmatrix}
    at^{-\theta}g &-(bt^{-\theta}+at^{-\theta}f)\\
     -\eps bg &  \eps(a+bf) 
  \end{bmatrix}
\]
from which it is clear that $\nu\inverse\in \iota(R)$, because $\nu\in \iota(R)$. 

\medskip

  (3) Using the facts that $f^*=-f$ and $ff^*-gg^*=-1$, it is easy to check that $u\nu=\nu u_{\eps,\theta}$, therefore $\nu\inverse
  u\nu=u_{\eps,\theta}$.
\end{proof}

By Part (3) of Proposition \ref{PropositionInvolutions}, we may now conclude that $\{\pm 1, \pm s, \pm
st\}$ forms a complete list of representatives of conjugacy classes of
involutions in $R$. Equivalently, the corresponding four primitive idempotents $e, 1-e, e', 1-e'$ form a complete list of representatives of conjugacy classes of primitive idempotents in $R$. As observed in Proposition \ref{noniso}, the corresponding projective modules are pairwise non-isomorphic, therefore these four idempotents are pairwise non-conjugate. This completes the proof.

\medskip
\noindent
{\bf {Acknowledgement.}}
 I.D., C.P., and D.W. were partially supported by the
Natural Sciences and Engineering Research Council of Canada. In addition, C.P. and D.W.  
were partially supported by the Canadian Defence
Academy Research Programme.

\bibliographystyle{alpha}\nocite{*}
\bibliography{inv.bib}
\end{document}